\newcommand{\RNum}[1]{\uppercase\expandafter{\romannumeral #1\relax}}
\numberwithin{equation}{section}
\def\exd{\text{\normalfont{d}}\,}
\def\aut{\sf{Aut}}
\def\sw{\sf{SW}}
\def\fsw{\sf{FSW}}
\def\mod{\sf{mod}}
\def\spin{\text{spin$^{\cc}$ structure }}
\def\sq{\sf{sq}}
\titleformat{\section}[runin]{\bfseries}{\thesection.}{3pt}{}[.]
\newtheorem*{theorem-non}{Theorem}
\theoremstyle{definition}
\newtheorem{remark}{Remark}
\def\UU{{\mib{U}}}
\newtheorem{example}{Example}
\newtheorem{thmx}{Theorem}
\tikzset{%
  symbol/.style={
    draw=none,
    every to/.append style={
      edge node={node [sloped, allow upside down, auto=false]{$#1$}}
    },
  },
}
\begin{document}

\title[Seidel's theorem via gauge theory]%
{Seidel's theorem via gauge theory}



\author{Gleb Smirnov}

\subjclass[2010]{Primary }

\date{}

\dedicatory{}

\begin{abstract}
A new proof is given that Seidel's generalized 
Dehn twist is not symplectically isotopic to the idenity.
\end{abstract}

\maketitle



\setcounter{section}{0}

\section{Introduction}
It has been shown by Seidel \cite{Sei-1, Sei-2} that 
symplectic 4-manifolds may admit symplectic 
diffeomorphisms which are smoothly isotopic to the 
identity but not symplectically so. He proved the following 
theorem (see \cite[Cor.\,1.6]{Sei-1}).
\begin{thma}[Seidel, \cite{Sei-1}]\label{t:A}
Let $X$ be a complete intersection that is neither $\cp^1 \times \cp^1$ nor 
$\cp^2$. Then there exists a symplectomorphism 
$\varphi \colon X \to X$ that is smoothly, yet not symplectially, 
isotopic to the identity.
\end{thma}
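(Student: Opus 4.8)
The plan is to recover Theorem~\ref{t:A} without any Floer-theoretic input, replacing it by Seiberg--Witten theory. The symplectomorphism will be the square $\varphi := \tau_{L}^{\,2}$ of a generalized Dehn twist along a Lagrangian $2$-sphere $L \subset X$. Such an $L$ exists: a complete intersection that is neither $\cp^{2}$ nor $\cp^{1}\times\cp^{1}$ arises as the smooth fibre of a one-parameter family of complete intersections whose central fibre has a single ordinary double point, and the vanishing cycle is an embedded Lagrangian $2$-sphere, the monodromy about the critical value being $\tau_{L}$. (The two excluded surfaces are genuinely special: $\cp^{2}$ contains no Lagrangian sphere, while in $\cp^{1}\times\cp^{1}$ with its monotone form the relevant $\tau_{L}^{\,2}$ is symplectically isotopic to the identity.) On a Weinstein neighbourhood of $L$, identified with a disc subbundle of $T^{*}S^{2}$, the model $\tau_{L}^{\,2}$ is compactly supported smoothly isotopic to the identity: the isotopy comes from the $2\pi$-periodicity of the normalized geodesic flow of the round $2$-sphere, equivalently from the fact that the loop in $\mathrm{SO}(3)$ realizing $\tau_{L}^{\,2}$ is twice a generator of $\pi_{1}(\mathrm{SO}(3))\cong\zz/2$, hence null-homotopic. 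So $\varphi$ is smoothly isotopic to $\id_{X}$. Note this forces us to square: $\tau_{L}$ alone acts on $H_{2}(X)$ by the reflection fixing $[L]^{\perp}$ and sending $[L]\mapsto-[L]$, so it is not even homotopic to $\id_{X}$. The whole content is therefore that $\varphi$ is not symplectically isotopic to the identity.

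To obstruct a symplectic isotopy I would construct a family Seiberg--Witten invariant. Assume $\varphi$ is symplectically isotopic to $\id_{X}$ and fix such an isotopy; concatenated with the smooth isotopy above it is a loop in $\diff(X)$ based at $\id_{X}$, hence classifies a smooth $X$-bundle $Z\to S^{2}$. Using the homotopy exact sequence of the inclusion $\symp(X,\omega)\hookrightarrow\diff(X)$ and the evaluation on $\omega$, one checks that --- precisely because $\varphi$ is symplectically trivial --- the loop can be corrected by a loop of diffeomorphisms so that $Z\to S^{2}$ acquires a fibrewise symplectic form, of fibrewise class $[\omega]$ on every fibre. Now take the fibrewise spin$^{c}$ structure $\mathfrak{s}$ with $c_{1}(\mathfrak{s}) = -K_{X} + 2\,[L]$, i.e.\ the canonical spin$^{c}$ structure twisted by $\mathcal{O}(L)$; its fibrewise Seiberg--Witten moduli space has expected dimension $[L]\cdot([L]-K_{X}) = [L]^{2} - K_{X}\cdot[L] = -2 - 0 = -2$, which cancels $\dim S^{2} = 2$. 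The parametrized moduli space is then generically a finite signed set of points and yields a family invariant $\fsw(Z,\mathfrak{s})\in\zz$.

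I would evaluate $\fsw(Z,\mathfrak{s})$ in two incompatible ways. First, the fibrewise symplectic form lets one apply Taubes' $\sw = \Gr$ fibrewise: $\fsw(Z,\mathfrak{s})$ becomes a family Gromov invariant counting pseudoholomorphic curves in the fibrewise homology class $\tfrac12\bigl(c_{1}(\mathfrak{s}) + K_{X}\bigr) = [L]$; but $[\omega]\cdot[L] = 0$, so no fibre carries a nonconstant curve in that class, and the count is $0$. Second, the diffeomorphisms making up the loop can be taken to differ from the identity only inside a fixed copy of $T^{*}S^{2}$ around $L$, so an excision/gluing argument identifies $\fsw(Z,\mathfrak{s})$ with a family Seiberg--Witten count on the $A_{1}$ ALE space $T^{*}S^{2}$ --- detecting the relevant loop in $\diff_{c}(T^{*}S^{2})$ coming from the smooth triviality of $\tau_{L}^{\,2}$ --- and a direct computation there gives $\pm 1$. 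Since $0\ne\pm 1$, no symplectic isotopy from $\varphi$ to $\id_{X}$ can exist, which is the theorem.

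The hard part will be this last excision together with the model computation: one must show that every contribution to $\fsw(Z,\mathfrak{s})$ is carried by configurations concentrated along $L$, then evaluate the resulting invariant of the $(-2)$-sphere and track signs so that it comes out $\pm 1$ rather than $0$ --- this is exactly where Seiberg--Witten theory replaces the input Seidel drew from the Fukaya category. A secondary issue is the case $b^{+}(X) = 1$, namely the del Pezzo complete intersections such as the cubic surface, where the Seiberg--Witten invariant is chamber-dependent and $\dim S^{2}$ exceeds $b^{+}$; there the family invariant must be defined relative to the wall structure and the comparison with the Gromov side carried out in Taubes' chamber. The remaining ingredients --- existence of the Lagrangian vanishing cycle, smooth triviality of $\tau_{L}^{\,2}$, and a family version of Taubes' theorem --- I would treat as known.
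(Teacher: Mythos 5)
First, a scope issue: the paper does not actually prove Theorem~\ref{t:A} --- it quotes it from Seidel and states explicitly that the gauge-theoretic method ``so far works only for K3 surfaces,'' i.e.\ it establishes only the quartic case (Theorem~\ref{t:myA}). Your proposal claims the full statement for all complete intersections, but nothing in the sketch addresses what happens when $c_1(X)\neq 0$: Taubes' vanishing no longer applies to the conjugate spin$^{\cc}$ structure, and the chamber problem you defer as a ``secondary issue'' for $b^+=1$ is not secondary but fatal to the claimed generality. At best you are proposing a proof of the K3 case.

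Second, and more importantly, your central mechanism --- computing a single invariant $\fsw(Z,\fr{s})$ in two ways and getting $0$ versus $\pm1$ --- has a gap exactly where the paper does its real work. For a K3 surface $b^+=3=\dim S^2+1$, so a family Seiberg--Witten invariant over $S^2$ is \emph{not} a single well-defined number: it depends on the homotopy class of the nowhere-vanishing section $b\mapsto\langle\eta_b\rangle$ of the bundle of self-dual harmonic spaces, i.e.\ on a winding number valued in $\pi_2(S^2)$. Your Gromov-side computation (empty moduli because $[\omega]\cdot[L]=0$) and your excision-to-$T^{*}S^{2}$ computation come with different perturbation data, and without controlling the chamber they may simply evaluate the invariant in different chambers, yielding no contradiction. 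Worse, the excision computation uses only the smooth bundle $Z\to S^2$, which exists unconditionally (the smooth isotopy always exists), so if your two computations really computed the same number you would have derived $0=\pm1$ with no hypothesis at all. The paper's argument is structured differently: it never compares two computations of one invariant, but computes $\fsw(\fr{s}_C)=1$ and $\fsw(-\fr{s}_C)=0$ directly (Kronheimer's theorem on the parametrized moduli space of the small resolution $\caly\to\Delta$, plus Taubes' vanishing), and then shows that the \emph{assumed} symplectic isotopy forces the winding number of the resulting family to vanish, which via charge conjugation forces $\fsw(\fr{s}_C)=\fsw(-\fr{s}_C)$. That unwinding lemma is the missing idea in your sketch, and the ``hard part'' you defer (a family count of $\pm1$ on the ALE space) is replaced in the paper by an already-proven concrete input rather than a new excision theorem.
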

The symplectomorphism $\varphi = [\tau]^2$ in Thm.\,\ref{t:A} 
is the (square of) generalized 
Dehn twist described below. Seidel computed the Floer cohomology 
group $HF^{*}([\tau])$ of $[\tau]$ 
with its module structure over the quantum cohomology ring $QH^{*}(X)$ 
and then showed 
that it differs from that of $HF^{*}([\tau]^{-1})$. As a result, 
he proved that $[\tau]$ is not isotopic to $[\tau]^{-1}$, 
hence not isotopic to the identity. This note aims to give a different proof of this result, though one that so far works only for K3 surfaces (see Thm.\,\ref{t:myA} below). The new proof does not rely on any Floer-theoretic considerations, but instead follows the approach of 
Kronheimer (see \cite{K}) and uses invariants derived from the Seiberg-Witten equations. 
\smallskip%

We define now Seidel's generalized Dehn twist as 
a Picard-Lefschetz monodromy map of a pencil of surfaces. 
To be specific, we restrict ourselves to the case of 
hypersurfaces of $\cp^3$. 
Inside the space $\cp^{N_d}$ of all hypersurfaces in $\cp^3$ of fixed degree $d$, 
there is a codimension-1 subvariety $\Sigma$ which 
parameterizes singular hypersurfaces. Smooth points of $\Sigma$ corresponds to surfaces 
which have a single double-point singularity. 
Pick a smooth point $p \in \Sigma$ and a small complex disk 
$\Delta$ meeting $\Sigma$ at the point $p$, transverse to $\Sigma$. 
Fix a local parameter $t \colon \Delta \to \cc$ such that $t(p) = 0$. Letting $X_t$ denote the hypersurface corresponding to the point 
$t \in \Delta$, we set
\begin{equation}\label{eq:X}
\calx = \left\{ (t,x) \in \Delta \times \cp^3\,|\, x \in X_t \right\}.
\end{equation}
We let $\left\{ X_t \right\}_{t \in \Delta - \left\{ 0 \right\}}$ 
be the family of non-singular projective 
surfaces obtained from $\calx$ by removing 
the singular fiber $X_0$. The Fubini-Study form 
of $\cp^3$ gives rise to a family of K{\"a}hler forms 
$\omega_t \in H^{1,1}(X_t;\rr)$ for each $t \in \Delta - \left\{ 0 \right\}$. 
Moser's trick says that the family of symplectic manifolds 
$\left\{ (X_t, \omega_t) \right\}_{t \in \Delta - \left\{ 0 \right\}}$ 
is locally-trivial, so there is a representation
\[
\pi_1(\Delta - \left\{ 0 \right\}, t_0) \to \pi_0 (\symp(X_{t_0})),
\]
where $t_0 \in \Delta - \left\{ 0 \right\}$ is some fixed base-point. 
The mapping class corresponding to the generator of 
$\pi_1(\Delta - \left\{ 0 \right\}) \cong \zz$ is called Seidel's 
generalized Dehn twist and it is denoted by $[\tau] \in \pi_0 (\symp(X_{t_0}))$. It is a classical fact that $[\tau]$ acts 
as a reflection in $H_2(X_{t_0};\zz)$. Hence,
\[
[\tau]^2 \in \ker \left[ \pi_0(\symp(X_{t_0})) \to \aut(H_2(X_{t_0};\zz)) \right].
\]
In fact, it is also well known (see \cite{Sei-1,K}) that 
\begin{equation}\label{eq:forget}
[\tau]^2 \in \ker \left[ \pi_0(\symp(X_{t_0})) \to \pi_0(\diff(X_{t_0})) \right].
\end{equation}
What we will prove is:
\begin{thma}\label{t:myA}
If $d = 4$, then $[\tau]^2 \in \pi_0(\symp(X_{t_0}))$ is a non-trivial element.
\end{thma}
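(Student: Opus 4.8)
The plan is to argue by contradiction. Assume $[\tau]^2$ is symplectically isotopic to the identity, and fix such an isotopy together with a smooth isotopy from $\id$ to $[\tau]^2$, which exists by \eqref{eq:forget}. Comparing the two isotopies produces a loop of symplectic forms on $X_{t_0}$, all in the class $[\omega_{t_0}]$; geometrically it lets one upgrade the degeneration $\calx\to\Delta$ to a \emph{smooth} family of symplectic quartic surfaces over the two-sphere, as follows. I would perform the base change $u\mapsto u^2=t$, so that the pulled-back family $\widetilde\calx=\calx\times_\Delta\ti\Delta$ has monodromy $[\tau]^2$ about $0$ and, near its singular point, is the threefold ordinary double point $\{z_1^2+z_2^2+z_3^2=u^2\}$. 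A small resolution $Y\to\widetilde\calx$ removes this singularity, the exceptional locus being a rational $(-2)$-curve $E$ whose class is that of the vanishing cycle $L$ -- a Lagrangian sphere in the smooth fibers. Now $\pi\colon Y\to\ti\Delta$ is a smooth family of K3 surfaces over a disc, with a fiberwise symplectic structure near the boundary $\partial\ti\Delta$ (where it is the mapping torus of $[\tau]^2$), and the assumed symplectic isotopy caps it off by a trivial piece into a closed manifold $\widehat Y$ fibered over $S^2$ by symplectic quartic surfaces in the class $[\omega_{t_0}]$.

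\textbf{The gauge-theoretic obstruction.} Following Kronheimer, I would attach to such a fibered four-manifold a family Seiberg-Witten invariant: run, fiberwise, the Seiberg-Witten equations perturbed following Taubes by a large multiple of the symplectic form, parametrized over $S^2$. Since every fiber is a K3 surface one has $b^+=3$, so a generic two-parameter family of perturbations avoids the reducible locus, and for a suitable $\mathrm{spin}^c$ structure $\mathfrak s$ on the fibers -- with $c_1(\mathfrak s)^2=-8$, e.g.\ $c_1(\mathfrak s)=\pm2[E]$, so that the parametrized moduli space is zero-dimensional -- a signed count $\mathrm{SW}^{\mathrm{fam}}(\widehat Y,\mathfrak s)\in\zz$ is defined. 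The plan is to compute it two ways. First, in the presence of the assumed isotopy the fibered manifold $\widehat Y$ is symplectically modelled on the trivial family $X_{t_0}\times S^2$, for which the count reduces to the ordinary Seiberg-Witten invariant of $(X_{t_0},\mathfrak s)$; since $X_{t_0}$ is a K3 surface and $c_1(\mathfrak s)\neq0$, the expected dimension is negative and this invariant vanishes, so $\mathrm{SW}^{\mathrm{fam}}(\widehat Y,\mathfrak s)=0$. Second, by Taubes' identification of the perturbed solutions with pseudoholomorphic curves, $\mathrm{SW}^{\mathrm{fam}}(\widehat Y,\mathfrak s)$ is a family Gromov-Witten count of rational curves in the class of $E$; in the family $Y$ that class is represented by a holomorphic curve only over $0\in\ti\Delta$, by the exceptional curve of the small resolution, and a local computation at the threefold double point shows this contributes $\pm1$. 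Thus $\mathrm{SW}^{\mathrm{fam}}(\widehat Y,\mathfrak s)=\pm1\neq0$, a contradiction, and the theorem follows.

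\textbf{The main obstacle.} The crux is the construction and the two computations of this family invariant, and two points need care. First, because $[\tau]^2$ is smoothly isotopic to the identity, the underlying \emph{smooth} family over $S^2$ may already be trivial; the invariant must therefore genuinely retain the fiberwise symplectic structure. It is defined in the chamber singled out by the Taubes perturbation, and it is exactly the hypothesis ``$[\tau]^2$ symplectically isotopic to $\id$'' that forces this chamber to coincide with the one of a product family. Making this precise -- in particular checking that the invariant depends only on the class of $[\tau]^2$ in $\pi_0(\symp(X_{t_0}))$ and that it vanishes whenever $[\tau]^2$ is symplectically trivial, if need be after replacing the numerical count by its Bauer-Furuta refinement when the relevant moduli space cannot be arranged to be zero-dimensional -- is the technical heart, and it is here that the restriction to quartics enters: the fiber is then a K3 surface, with trivial canonical class, and the $(-2)$-classes control the Seiberg-Witten basic classes. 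Second, one must show that the family Gromov-Witten count of the exceptional $(-2)$-curve over the resolved degeneration is non-zero, with correct orientations and signs in the parametrized setting -- a local model computation at the threefold ordinary double point, matched against the Seiberg-Witten side through Taubes' correspondence. The remaining ingredients -- the passage from the two isotopies and the degeneration to the smooth fibered manifold over $S^2$ (via Ehresmann's theorem and Moser's argument), and the analytic foundations of family Seiberg-Witten theory (transversality, compactness from $b^+\geq2$, and orientations of parametrized moduli spaces) -- are by now routine.
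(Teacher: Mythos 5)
Your overall strategy is the one the paper uses (and attributes to Kronheimer): base change, Atiyah's small resolution, capping off over $S^2$ using the assumed symplectic isotopy, and a family Seiberg--Witten count with Taubes-type perturbation for the spin$^{\cc}$ structure with $c_1=2[C]$, where the exceptional $(-2)$-curve over the central fiber contributes $1$. That half of your computation matches the paper. The genuine gap is in your ``first computation'': you claim that the hypothesis forces $\widehat Y$ to be \emph{symplectically modelled on the trivial family} $X_{t_0}\times S^2$, so that the family invariant reduces to the ordinary one and vanishes. The hypothesis gives no such thing. It only trivializes the monodromy over the boundary circle, which is exactly what is needed to perform the clutching; the resulting family over $S^2$ still contains the degeneration region near the central fiber, where the cohomology classes $[\omega_t]$ move off the wall $[C]^{\perp}$ (the class $[C]$ goes from $\omega$-null to $\omega$-positive). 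A trivial symplectic fibration cannot do this, and indeed the nonvanishing of your second computation is precisely a proof that $\widehat Y$ is \emph{not} symplectically trivial --- so the two computations, as you set them up, are not in contradiction with each other but only with your unproved triviality claim. Your own ``main obstacle'' paragraph gestures at this (``the hypothesis forces this chamber to coincide with the one of a product family''), but that statement is false rather than merely technical.

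The missing ingredient is the symmetry the paper proves in its Section 3. For a family of K3 surfaces over $S^2$ (so $b^+=3$), the admissible perturbations live in the complement of a ball in the positive cone $K\simeq S^2$, and a family $(g_b,\eta_b)$ acquires a winding number, the degree of $b\mapsto[\langle\eta_b\rangle]$. If this degree is zero --- which holds here because $[\omega_b]$ is constant up to $O(\varepsilon)$, so the Taubes perturbations $-\rho^2\omega_b+\cdots$ give a nullhomotopic map --- then $+\lambda\eta_b$ and $-\lambda\eta_b$ are homotopic through irreducible-free perturbations, whence $\fsw_{(g_b,\eta_b)}(\fr{s})=\fsw_{(g_b,-\eta_b)}(\fr{s})=\fsw_{(g_b,\eta_b)}(-\fr{s})$ by charge conjugation. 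The contradiction is then between $\fsw(\fr{s}_C)=1$ (Kronheimer's single solution over the central fiber, plus Taubes' vanishing over the rest of the base) and $\fsw(-\fr{s}_C)=0$ (Taubes' vanishing on every fiber, since $-[C]\cdot[\omega_t]\leq 0$), both computed on the \emph{same} nontrivial family. You should replace your appeal to triviality of $\widehat Y$ by this conjugation-plus-unwinding argument; without it, or some substitute constraint on the invariant of $\widehat Y$ that genuinely follows from the hypothesis, the proof does not close.
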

\begin{remark}
Throughout the paper we work with $\zz_2$ coefficients. However, if one is willing to work with Seiberg-Witten inavriants over $\zz$, 
it is fairly easy to show that $[\tau]^2$ is non only non-trivial but also 
an element of infinite order.
\end{remark}
We now sketch an argument (due to Kronheimer) 
that establishes \eqref{eq:forget}. 
If $\sq \colon \Delta \to \Delta$ is the map $\sq(t) = t^2$, then 
define a complex-analytic fiber space $\calx'$ as the base change
\begin{equation}\label{d:double}
\begin{tikzcd}
\calx' \arrow{d}{} \arrow{r}{}  & \calx \arrow{d}{}\\
\Delta \arrow{r}{\sq} & \Delta,
\end{tikzcd}
\end{equation}
which is equivalent to setting
\begin{equation}\label{eq:x'}
\calx' = \left\{ (t,x) \in \Delta \times \cp^3\,|\, x \in X_{\sq(t)} \right\}.
\end{equation}
The space $\calx'$ is a non-smooth complex 3-fold with a single 
double-point in the fiber over $0$. We proceed by recalling a result of 
Atiyah which allows us to get rid of the double-point in the central fiber.
\begin{thma}[Atiyah, \cite{Atia}]\label{thm:atia}
There exists a complex-analytic family of 
non-singular surfaces $\caly \to \Delta$ and a morphism of families 
$h \colon \caly \to \calx'$,
\begin{equation}\label{d:resolve}
\begin{tikzcd}[row sep = large, column sep = small]
\caly \arrow{d}{h}  & Y_t \arrow[l, hook] \arrow{d}{h_t}\\
\calx'   & X'_t \arrow[l, hook] ,
\end{tikzcd}
\end{equation}
such that for each $t \in \Delta - \left\{ 0 \right\}$, 
$h_t \colon Y_t \to X'_t$ is an isomorphism, whereas 
$h_0 \colon Y_0 \to X_0$ is the minimal resolution. The exceptional 
divisor of $h_0$, which is a smooth rational curve $C \subset Y_0$ of 
self-intersection number $(-2)$, is embedded in $\caly$ as 
a $(-1,-1)$-curve, that is, a curve whose normal bundle is isomorphic to $\scro(-1) \oplus \scro(-1)$. 
\end{thma}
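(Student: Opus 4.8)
The plan is to localize: the statement only concerns a neighbourhood of the single singular point of $\calx'$, so I would reduce to an explicit analytic model there, resolve it by hand following Atiyah, and glue to the identity over the complement. First, fix the local geometry of $\calx$. Writing $X_{t}=\{F(t,\cdot)=0\}$ for a holomorphic family of homogeneous degree-$d$ polynomials one has $\calx=\{F=0\}$, and a point of $\calx$ is singular precisely where $\partial_{x}F=0$ and $\partial_{t}F=0$; the first condition together with $F=0$ forces $t=0$ and $x$ to be the unique node $x_{0}$ of $X_{0}$, and there $\partial_{t}F\ne 0$ says exactly that $\Delta$ is transverse to $\Sigma$, i.e.\ not tangent to the equisingular stratum. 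Hence $\calx$ is smooth, $\calx\to\Delta$ is a submersion off $x_{0}$, and by the holomorphic Morse lemma there are analytic coordinates centred at $x_{0}$ in which
\[
\calx=\bigl\{(z_{1},z_{2},z_{3},t): z_{1}^{2}+z_{2}^{2}+z_{3}^{2}=t\bigr\},\qquad \calx\to\Delta,\ (z,t)\mapsto t .
\]

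Now base change. Pulling back along $\sq(t)=t^{2}$ and using $s$ with $t=s^{2}$ as the new base coordinate, the local model of $\calx'$ becomes $\{z_{1}^{2}+z_{2}^{2}+z_{3}^{2}-s^{2}=0\}$, the three-dimensional ordinary double point; off the origin the projection to $\Delta$ is still a submersion, so $\calx'$ has exactly one singular point. After a $\cc$-linear change of coordinates this is the affine cone $\{w_{1}w_{2}=w_{3}w_{4}\}$ over the Segre quadric in $\cp^{3}$, in which moreover the projection to $\Delta$ becomes a nonzero linear form. Following Atiyah I would blow up the Weil divisor $\{w_{1}=w_{3}=0\}$: this yields a smooth threefold $\caly_{0}$ with $\caly_{0}\to\calx'$ an isomorphism over the complement of the vertex and contracting a single rational curve $C\cong\cp^{1}$ to the vertex, and in the two standard affine charts $\caly_{0}\cong\cc^{3}$ with $C$ the common coordinate line, so that $N_{C/\caly_{0}}\cong\scro(-1)\oplus\scro(-1)$. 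Then I would glue: set $\caly:=\calx'$ with $h:=\id$ away from the vertex and $\caly:=\caly_{0}$ near it; the two agree on the punctured neighbourhood of the vertex, so they patch to a complex manifold $\caly$ with a proper map $h\colon\caly\to\calx'$, and composing with $\calx'\to\Delta$ gives $\caly\to\Delta$, flat since $\caly$ is smooth, connected and dominates $\Delta$; its fibres are surfaces.

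Finally, the fibrewise assertions. For $t\ne 0$ the map $h_{t}$ is an isomorphism onto $X'_{t}=X_{\sq(t)}$, which is smooth. For $t=0$ one checks in the two charts that the pulled-back linear form $s$ has smooth zero locus (a graph over two of the three chart coordinates) meeting $C$ along all of $C$; hence $Y_{0}=h^{-1}(X'_{0})$ is a smooth surface, $h_{0}\colon Y_{0}\to X_{0}$ is an isomorphism away from $C\mapsto x_{0}$, and its exceptional set is exactly $C$. From the sequence $0\to N_{C/Y_{0}}\to N_{C/\caly}\to N_{Y_{0}/\caly}|_{C}\to 0$ together with $N_{Y_{0}/\caly}|_{C}\cong\scro_{C}$ (as $Y_{0}$ is a fibre of a map to a curve, cut out by one function with nonvanishing differential) one gets $\deg N_{C/Y_{0}}=-2$, hence $N_{C/Y_{0}}\cong\scro(-2)$; thus $C$ is a smooth $(-2)$-curve in $Y_{0}$, and since contracting a $(-2)$-curve produces an $A_{1}$-singularity, $h_{0}$ is the minimal resolution of the node of $X_{0}$.

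The only real difficulty is bookkeeping: one has to choose the two charts of the small resolution carefully enough that the smoothness of $\caly$, the smoothness of the new central fibre $Y_{0}$, and the two normal-bundle computations $N_{C/\caly}\cong\scro(-1)^{\oplus 2}$ and $N_{C/Y_{0}}\cong\scro(-2)$ all fit together coherently. The transversality hypothesis enters only at the very start, to make $\calx$ smooth and legitimize the model $z_{1}^{2}+z_{2}^{2}+z_{3}^{2}=t$; after that everything is the local calculation of \cite{Atia}.
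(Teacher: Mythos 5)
The paper does not actually prove this statement: it quotes it from Atiyah, pointing to \cite{Atia} (Thm.~2 for the resolution, \S 3 for the $(-1,-1)$ assertion). Your argument is a correct reconstruction of that source's proof --- reduction to the local conifold model $w_1w_2=w_3w_4$ via transversality of $\Delta$ to $\Sigma$ and the parametric holomorphic Morse lemma, small resolution by blowing up a ruling plane, gluing with the identity away from the node, and the normal-bundle bookkeeping $N_{C/\caly}\cong\scro(-1)\oplus\scro(-1)$ and $N_{C/Y_0}\cong\scro(-2)$ via the exact sequence with trivial quotient $N_{Y_0/\caly}|_C$. So it is correct and takes essentially the same route as the cited proof the paper relies on.
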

This is the statement of Thm.\,2 in \cite{Atia} except for the assertion about $C$, 
which is explained in \cite[\S\,3]{Atia}. The family morphism 
$h$ is an isomorphism away from the central fibers, so the monodromy of 
$\left\{ Y_t \right\}_{t \in \Delta - \left\{ 0 \right\}}$ 
is equal to that of 
$\left\{ X'_t \right\}_{t \in \Delta - \left\{ 0 \right\}}$. 
However, as the only singular fiber of $\calx'$ has been replaced by 
a smooth surface, the fibers of $\caly$ are all smooth. Hence, 
the monodromy is smoothly isotopic to the identity and \eqref{eq:forget} follows.

\statebf Acknowledgements. 
\ I would like to 
thank Jianfeng Lin for his suggestion to 
consider the winding number 
as a starting point for studying two-dimensional families of K3 surfaces. Thanks also to Sewa Shevchishin for many useful discussions.

\section{Family Seiberg-Witten invariants}
Here, we briefly recall the definition of 
the Seiberg-Witten invariants in both 
classical and family settings. The given 
exposition is extremely brief, meant mainly to fix 
the notations and also to show just how little of 
Seiberg-Witten theory we need to prove Thm.\,\ref{t:myA}. 
We refer the reader to \cite{Nic} for a comprehensive 
introduction to four-dimensional gauge theory 
(see also \cite{wild} for a quick survey), whereas the theory of 
family invariants can be read from the seminal paper 
\cite{LL}.

Let $X$ be a closed oriented {\itshape simply-connected} 4-manifold endowed 
with a Riemannian metric $g$, a self-dual form $\eta$, and a spin$^{\cc}$-structure $\mathfrak{s}$.
Associated to a spin$^{\cc}$-structure are spinor bundles $W^{\pm}$ and a determinant 
line bundle $L = \det\, W^{+}$ over $X$. Define the monopole map 
\[
\mu \colon \Gamma(W^{+}) \times \cala \to 
\Gamma(W^{-}) \times i \,\Gamma( \Lambda_{+}^2 )
\quad \text{by}\quad
\mu(\varphi,A) := ( \cald^{A}\,\varphi, F^{+}_A - \sigma(\varphi) - i \eta ),  
\]
where $\varphi \in \Gamma(W^{+})$ is a self-dual spinor field, 
$A \in \cala$ is a $\UU(1)$-connection on $L$, and $F^{+}_{A}$ is the self-dual part of the curvature 
$F_{A}$. Finally, $\sigma \colon W^{+} \to i \Lambda^{2}_{+}$ stands for the squaring map.

The monopole space (or, the Seiberg-Witten solution space) is, by definition, 
the zero set of $\mu$, and the moduli space of monopoles, denoted by 
$\scrm_{g,\eta}^{\mathfrak{s}}$, is defined to be the quotient of 
$\mu^{-1}(0)$ by the gauge group 
\[
\scrg = \left\{ g \colon X \to S^1 \right\}.
\]
If now $g$ is an element of $\scrg$, the corresponding 
gauge transformation is given by
\[
g \cdot (\varphi, A) := ( e^{-i f} \varphi, A + 2\, i\, \exd f ),\quad 
\text{where $g = e^{i f}$.}
\]
Such $f$ exists because $X$ is simply-connected. 
Gauge transformations preserve the property of 
being a monopole.

The monopole map 
depends on the choices of Riemannian metric, perturbation form, and spin$^{\cc}$-structure. Moreover, a spin$^{\cc}$ structure itself is defined with respect to a metric on $X$. However, it is explained in 
\cite[\S\, 2.2]{R1} that a spin$^{\cc}$ structure 
for one metric can be extended to all of them. Therefore, we can 
consider a family of monopole maps parametrized by $(g,\eta)$.
Let $\Pi$ denote the space of pairs $(g,\eta)$, where $g$ is a metric on $X$ and $\eta$ is a $g$-self-dual form. Note that $\Pi$ is naturally 
a vector bundle over the space of Riemannian metrics $\calr$ on $X$.
Using the parameterized monopole map, one defines the universal 
moduli space as follows
\[
\mathfrak{M}^{\fr{s}} := \bigcup_{(g,\eta) \in \Pi} \scrm_{(g,\eta)}^{\fr{s}}.
\]
Let $\Pi_{ \text{\normalfont{red}} } \subset \Pi$ be the subset 
of pairs $(g,\eta)$ for which $\mu^{-1}(0)$ contains pairs of the form 
$(0,A)$, the so-called reducible monopoles.
Unless $\varphi = 0$, the stabilizer of $(\varphi,A)$ w.r.t.\,$\scrg$ is trivial. However, the stabilizer of $(0,A)$ is $\UU(1)$. 
Therefore, reducible monopoles obstruct $\mathfrak{M}$ to be a manifold.
In case $\varphi = 0$, the equation $\mu(\varphi,A) = 0$ takes the simple form
\begin{equation}\label{eq:red-simple}
F^{+}_{A} - i\eta = 0.
\end{equation}
A solution to \eqref{eq:red-simple} exists iff
\begin{equation}\label{eq:harmonic_parts_0}
\langle F^{+}_{A} \rangle_{g}  =  i \langle \eta \rangle_{g}, 
\end{equation}
where the brackets in both sides denote 
the self-dual harmonic part of the 2-form in question. Recall here that 
the space $\Gamma(\Lambda^2_{+})$ of self-dual 2-forms 
splits as $H_{g} \oplus \im\text{d}^{+}$, where 
$H_{g}$ stands for the space of 
harmonic self-dual forms on $X$, and 
$\im\text{d}^{+}$ stands for the image of 
$\text{d}^{+} \colon \Gamma(\Lambda^1) 
\to \Gamma(\Lambda^2_{+})$. For abbreviation, we will drop the subscript 
and write $\langle\ \rangle$ instead of $\langle\ \rangle_{g}$ when no confusion can 
arise. Since the (self-dual) harmonic part of a closed form 
depends on its cohomology class but not on the specific representative, 
we restate \eqref{eq:harmonic_parts_0} as
\[
\langle \eta + 2 \pi c_1(L) \rangle = 0. 
\]
Now, we set
\[
\Pi^{*} = \left\{ (g,\eta) \in \Pi\,|\, 
\langle \eta + 2 \pi c_1(L) \rangle_g \neq 0 \right\}.
\]
Let us describe the homotopy type of this space. 
Denote by $\calh$ the vector bundle over $\calr$ whose fiber 
over $g \in \calr$ is the space $H_{g}$ of all $g$-self-dual harmonic forms, 
and denote by $\calh^{*}$ the complement, in $\calh$, of 
the section given by 
\[
 -2 \pi\langle c_1(L) \rangle.   
\]
As $\calh$ is a vector bundle of rank $b^{+}(X)$ on a contractible 
space, it follows that $\calh^{*}$ has the homotopy type of $S^{b^{+}(X)-1}$.
Now, observe that the bundle map 
\begin{equation}\label{map:harmonic-pr}
\Pi \to \calh,\quad (g,\eta) \to 
\langle \eta \rangle
\end{equation}
sits in the diagram
\[
\begin{tikzcd}[row sep = small, column sep = large]
\Pi \arrow{r}{\text{\eqref{map:harmonic-pr}}} & \calh\\
                    \Pi^{*} \arrow[u,hook] \arrow{r}{\text{\eqref{map:harmonic-pr}}} & 
                    \calh^{*}\, \arrow[u,hook]
\end{tikzcd}
\]
and that is has contractible fibers. 
Thus $\Pi^{*}$ has the same homotopy type as $\calh^{*}$.

Let us now consider the piece of $\mathfrak{M}$, denoted by $\mathfrak{M}^{*}$, that lies over 
$\Pi^{*}$. A classical result (see \cite[Lem.\,5]{K-M}) is that the projection 
\[
\pr_{\mathfrak{s}} \colon \mathfrak{M}^{*} \to \Pi^{*}
\]
is a proper Fredholm map of index 
\[
d(\mathfrak{s}) = \frac{1}{4}( c_1^2(\mathfrak{s}) - 3 \sigma(X) -2 \chi(X) ),
\]
where $c_1(\mathfrak{s})$, the Chern class of the 
\spin $\fr{s}$, is simply $c_1(L)$. The Sard-Smale theorem then 
asserts that for a generic 
$(g,\eta) \in \Pi^{*}$ the moduli space 
$\scrm_{(g,\eta)}^{\mathfrak{s}} = 
\pr^{-1}_{\mathfrak{s}}(g,\eta)$ is either empty or 
a compact manifold of dimension $d(\fr{s})$. If 
$d(\fr{s}) = 0$, then $\scrm_{(g,\eta)}^{\fr{s}}$ is zero-dimensional, 
and thus consists of finitely-many points. We call
\begin{equation}\label{eq:sw-def}
\sw_{(g,\eta)}(\fr{s}) := 
\# \left\{ \text{points of $\scrm_{(g,\eta)}^{\fr{s}}$} \right\}\,\mod\,2
\end{equation}
the ($\zz_2$-)Seiberg-Witten invariant for the \spin $\fr{s}$ 
w.r.t.\,$(g,\eta)$. If $b_2^{+}(X) > 1$, then $\Pi^{*}$ is connected, and then (again, by the Sard-Smale theorem), for every 
two pairs $(g_1,\eta_1)$ and $(g_2,\eta_2)$ and 
every generic path $(g_t,\eta_t)$ connecting them, the corresponding 
moduli space
\begin{equation}\label{eq:homotop}
\bigcup_t \scrm_{(g_t,\eta_t)}^{\fr{s}}
\end{equation}
is a smooth one-dimensional manifold which draws a cobordism between 
$\scrm_{(g_0,\eta_0)}^{\fr{s}}$ and $\scrm_{(g_1,\eta_1)}^{\fr{s}}$. Hence,
\[
\sw_{(g_0,\eta_0)}(\fr{s}) = \sw_{(g_1,\eta_1)}(\fr{s}).
\]
Another classical fact 
(see e.g. \cite[Prop.\,2.2.22]{Nic}) is that 
there is a charge conjugation involution $\fr{s} \to -\fr{s}$ 
on the set of spin$^\cc$ structures, which changes the sign of 
$c_1(\fr{s})$, and there is a canonical isomorphism between 
\[
\scrm_{(g,\eta)}^{\fr{s}}\quad \text{and}\quad 
\scrm_{(g,-\eta)}^{-\fr{s}}. 
\]
Hence,
\begin{equation}\label{eq:charge}
\sw_{(g,\eta)}(\fr{s}) = \sw_{(g,-\eta)}(-\fr{s}).
\end{equation}
The corresponding $\zz$-valued Seiberg-Witten invarinats 
are also equal to each other, but only up to sign. 
See \cite[Prop.\,2.2.26]{Nic} for the precise statement.
\medskip%

We now recall a version of the family Seiberg-Witten invariants 
that is adequate for our purpose. This simplest version, along 
with other kinds of the family invariants, were systematically studied 
by Li-Liu in \cite{LL}. Let $B$ be a compact smooth manifold 
and suppose we have a fiber bundle $\calx \to B$ whose 
fibers are diffeomorphic to $X$. Pick a \spin $\fr{s}$ on the vertical tangent 
bundle $T_{\calx/B}$ of $\calx$.
For a family of fiberwise metrics $\left\{ g_b \right\}_{b \in B}$ and 
a family of $g_b$-self-dual forms $\left\{ \eta_b \right\}_{b \in B}$, 
we consider the 
parameterized moduli space
\[
\fr{M}_{(g_b,\eta_b)}^{\fr{s}} := \bigcup_{b \in B} \scrm_{(g_b,\eta_b)}^{\fr{s}}. 
\]
Assume that 
\begin{equation}\label{eq:irreducible}
\langle \eta_b + 2 \pi c_1(L) \rangle \neq 0\quad 
\text{for each $b \in B$.}
\end{equation}
Under this assumption and, perhaps after a small perturbation 
of $\left\{ (g_b,\eta_b) \right\}_{b \in B}$, 
the moduli space $\fr{M}_{(g_b,\eta_b)}^\fr{s}$, 
if not empty, is a manifold of dimension
\[
d(\mathfrak{s},B) = 
\frac{1}{4}( c_1^2(\mathfrak{s}_{X}) - 3 \sigma(X) -2 \chi(X) ) + \dim\,B,
\]
where $\fr{s}_{X}$ is the restriction of $\fr{s}$ to any fiber $X$. 
Suppose that $d(\mathfrak{s},B) = 0$. Then, as in \eqref{eq:sw-def}, 
we define the family Seiberg-Witten invariant 
$\fsw_{(g_b,\eta_b)}(\fr{s})$ by counting the points of $\fr{M}_{(g_b,\eta_b)}^\fr{s}$. 
Just like the ordinary Seiberg-Witten invariants 
(see \eqref{eq:homotop}), the family invariants are unchanged under the homotopies of $(g_b,\eta_b)$ that satisfy \eqref{eq:irreducible}, and, just like the ordinary 
invariants (see \eqref{eq:charge}), they share the conjugation symmetry
\begin{equation}\label{eq:charge-family}
\fsw_{(g_b,\eta_b)}(\fr{s}) = \fsw_{(g_b,-\eta_b)}(-\fr{s}).
\end{equation}
Already this version of the family invariants is capable to detect 
non-trivial families of cohomologous symplectic forms (see \cite{K}) 
as well as to distinguish between different connected components of positive scalar curvature metrics (see \cite{R2}).
\smallskip%

Let us now turn to the special case of $b_2^{+}(X) = 3$. 

\section{Unwinding families}\label{sec:unwinding}
As before, let $\calx$ be a smooth fiber bundle over $B$ with fiber $X$. 
From now on, we assume that $B$ is the 2-sphere $S^2$ and $X$ is the K3 surface. 
Pick a family $\left\{ g_b \right\}_{b \in B}$ 
of fiberwise metrics on the fibers of $\calx$. Suppose now that a fiber $X_b$ is 
given a spin$^{\cc}$ structure $\fr{s}_X$. 
Then it is easy to show (see e.g. \cite[Prop.\,2.1]{Nak}) 
that, under the topological 
assumptions that we imposed on $\calx$, there exists a spin$^{\cc}$ on $T_{\calx/B}$ 
whose restriction to $X$ is $\fr{s}_X$.

The second cohomology group of a $K3$ surface is 
a free $\zz$-module of rank $22$ which, when endowed 
with the bilinear form coming from the cup product, becomes 
a unimodular lattice of signature $(3,19)$.

Let us fix (once and for all) 
an abstract lattice $L$ which is isometric to $H^2(X;\zz)$ and 
an isometry $L \to H^2(X_b;\zz)$, where $b \in B$ is some fixed base-point.
The dependence on the choice of base-point 
will be inessential: as $B$ is simply-connected, the groups 
$\left\{ H^2(X_b;\rr) \right\}_{b \in B}$ 
are all canonically isomorphic. We also need to 
introduce the (open) positive cone 
\[
K = \left\{ \kappa \in L\,|\, \kappa^2 > 0 \right\},
\]
which is homotopy-equivalent to $S^2$.

As before, we let $\calh$ denote the bundle 
on $B$ whose fiber over $b \in B$ is the space 
of harmonic $g_b$-self-dual forms. 
Pick a family $\left\{ \eta_b \right\}_{b \in B}$ of 
$g_b$-self-dual forms. Suppose that $(g_b,\eta_b)$ satisfies 
\[
\langle \eta_b \rangle \neq 0\quad 
\text{for each $b \in B$,}
\]
so that the correspondence 
$b \to \langle \eta_b \rangle$ yields a nowhere vanishing section of 
$\calh$. Then, associated to this section, there is a map:
\[
B \to K,\quad b \to [\langle \eta_b \rangle] \in L,
\]
where the brackets $[\cdot]$ signify the cohomology 
class of $\langle \eta_b \rangle$. Since both $B$ and $K$ are homotopy $S^2$, this map 
has a degree, called the winding number of the family 
$(g_b,\eta_b)$. 
\begin{lem}
Suppose that the winding number of $(g_b,\eta_b)$ vanishes. Then 
\begin{equation}\label{eq:wind-family}
\fsw_{(g_b,\lambda \eta_b)}(\fr{s}) = \fsw_{(g_b,-\lambda \eta_b)}(\fr{s})
\end{equation}
for $\lambda$ sufficiently large.
\end{lem}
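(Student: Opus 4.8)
The plan is to exploit the conjugation symmetry \eqref{eq:charge-family} together with a deformation argument. By \eqref{eq:charge-family} we have $\fsw_{(g_b,-\lambda\eta_b)}(\fr{s}) = \fsw_{(g_b,\lambda\eta_b)}(-\fr{s})$, so it suffices to show that for $\lambda$ large the two families $(g_b,\lambda\eta_b)$ with the spin$^\cc$-structures $\fr{s}$ and $-\fr{s}$ produce the same family invariant. The key point is that the family invariant only depends on $(g_b,\lambda\eta_b)$ through the homotopy class of the section $b\mapsto [\langle\lambda\eta_b\rangle] = \lambda[\langle\eta_b\rangle]$ of the sphere bundle $\calh^\ast$ — really, through the associated map $B\to K$ up to homotopy — provided we stay in the region where \eqref{eq:irreducible} holds. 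Since $\dim B = 2 = b_2^+(X) - 1$, the map $B\to K\simeq S^2$ is classified by its degree, i.e. by the winding number, which by hypothesis is $0$. A degree-zero map $S^2\to S^2$ is null-homotopic, so the section $b\mapsto[\langle\eta_b\rangle]$ can be homotoped, through nowhere-vanishing sections of $\calh$, to a constant section $b\mapsto\kappa_0$ for some fixed $\kappa_0\in K$.

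First I would make the reduction precise: choose $\lambda$ so large that the reducible wall \eqref{eq:irreducible}, which for fixed $g_b$ is the affine condition $\langle\eta_b + 2\pi c_1(L)/\lambda\rangle\neq 0$, is governed (up to a homotopy that shrinks the $2\pi c_1(L)/\lambda$ correction term) purely by the sign and homotopy class of $b\mapsto[\langle\eta_b\rangle]$; here one uses that the winding-zero section stays a definite distance from the zero section after rescaling. Then I would invoke the homotopy invariance of $\fsw$ stated in the paragraph following \eqref{eq:charge-family}: deforming $(g_b,\lambda\eta_b)$ to $(g_b,\lambda\eta'_b)$, where $b\mapsto[\langle\eta'_b\rangle]\equiv\kappa_0$ is constant, leaves $\fsw_{(g_b,\cdot)}(\fr{s})$ unchanged, and likewise for $-\lambda\eta_b\rightsquigarrow-\lambda\eta'_b$. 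So the claim is reduced to the \emph{constant} family $b\mapsto\kappa_0$: one must show $\fsw_{(g_b,\lambda\eta'_b)}(\fr{s}) = \fsw_{(g_b,-\lambda\eta'_b)}(\fr{s})$ when the perturbation class is constant on $B$.

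For the constant case, the section $b\mapsto\langle\eta'_b\rangle$ and its negative $b\mapsto-\langle\eta'_b\rangle$ lie in the same connected component of the space of nowhere-vanishing sections of $\calh$ \emph{over a contractible base} — but $B = S^2$ is not contractible, so one cannot directly connect $\kappa_0$ to $-\kappa_0$ through nonvanishing sections within a single fiber $H_{g_b}\cong\rr^3$. Instead I would argue that, once the perturbation is pulled back from a single point, the parameterized moduli space $\fr{M}^{\fr{s}}_{(g_b,\lambda\eta'_b)}$ fibers (after generic perturbation) over $B=S^2$ with fiber the ordinary zero-dimensional moduli space $\scrm^{\fr{s}}$; since $d(\fr{s},B)=0$ forces $d(\fr{s}) = -\dim B = -2 < 0$, the generic ordinary moduli space is empty, and a parameterized transversality/bordism argument over $S^2$ then shows that the family count is a characteristic-number-type expression depending only on the index bundle data, which is manifestly conjugation-symmetric because the charge-conjugation diffeomorphism $\scrm^{\fr{s}}\leftrightarrow\scrm^{-\fr{s}}$ fibers over $B$. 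Concretely, $\fsw_{(g_b,\lambda\eta'_b)}(\fr{s})$ equals the mod-$2$ Euler number of an index (virtual) bundle over $S^2$ whose formation commutes with charge conjugation when the perturbation is constant, giving the desired equality.

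The main obstacle I anticipate is the second step — handling the constant family and genuinely proving that conjugation symmetry survives in the parameterized setting over $S^2$. The subtlety is that $b\mapsto\kappa_0$ and $b\mapsto-\kappa_0$ cannot be joined through nonvanishing harmonic sections fiberwise, so \eqref{eq:charge-family} plus naive homotopy invariance is not quite enough; one must instead either (i) identify $\fsw$ of a constant family with an honest cohomological/characteristic-class quantity and check that quantity is conjugation-invariant, or (ii) build an explicit bordism over $S^2\times[0,1]$ between the two parameterized moduli spaces using the pointwise charge-conjugation isomorphism, checking it is compatible with the metric family and the reducible-avoidance condition for large $\lambda$. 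Route (i) is cleaner: since $d(\fr{s})=-2$, the relevant virtual index bundle over $S^2$ has rank $-1$ in the sense of the family invariant, its "count" is an Euler number, and reflecting $\eta'$ reflects neither $g_b$ nor the index bundle, so the two Euler numbers coincide mod $2$.
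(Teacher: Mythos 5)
There is a genuine gap, and it is created by your very first move. Applying the conjugation symmetry \eqref{eq:charge-family} converts the claim into $\fsw_{(g_b,\lambda \eta_b)}(\fr{s}) = \fsw_{(g_b,\lambda \eta_b)}(-\fr{s})$, i.e.\ into \eqref{eq:lin-symmetry} --- but in the paper \eqref{eq:lin-symmetry} is a \emph{corollary} of this lemma, not an ingredient of it, and it is strictly harder: comparing $\fr{s}$ with $-\fr{s}$ over the \emph{same} perturbation family is not a homotopy-invariance statement, so none of the machinery set up in \S 2 applies to it directly. Your attempt to close this in the constant case ("the count is a characteristic-number-type expression\dots manifestly conjugation-symmetric") is not a proof and is in fact circular: charge conjugation carries $\fr{M}^{\fr{s}}_{(g_b,\eta'_b)}$ to $\fr{M}^{-\fr{s}}_{(g_b,-\eta'_b)}$, so to conclude you would still have to relate $\fr{M}^{-\fr{s}}_{(g_b,-\eta'_b)}$ to $\fr{M}^{-\fr{s}}_{(g_b,\eta'_b)}$ --- which is exactly a homotopy from $-\eta'_b$ to $\eta'_b$, i.e.\ the content of the lemma you are proving. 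Any "manifest" symmetry argument that did not use such a homotopy would be suspect, since the whole point of \S 4 is that $\fsw(\fr{s}_C)=1\neq 0=\fsw(-\fr{s}_C)$ can happen for a family over $S^2$.

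The intended proof is shorter and avoids conjugation entirely: one shows that $\{\lambda\eta_b\}$ and $\{-\lambda\eta_b\}$ are homotopic through perturbations satisfying \eqref{eq:irreducible}, and then invokes homotopy invariance of $\fsw$ for the \emph{same} $\fr{s}$ on both sides. After replacing $\eta_b$ by its harmonic part and choosing $\lambda$ large (your first paragraph essentially does this), both maps $b\mapsto \pm\lambda[\langle\eta_b\rangle]$ land in the complement of a ball around the wall, a space homotopy equivalent to $S^2$; the first has degree $0$ by hypothesis, and the second has degree $\deg(-\mathrm{id})\cdot 0=0$ as well, so they are homotopic and the wall-avoiding homotopy lifts to sections of $\calh$. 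Note also that your stated obstacle --- that $\kappa_0$ and $-\kappa_0$ "cannot be joined through nonvanishing sections" over $S^2$ --- is not real: two constant maps $S^2\to S^2$ both have degree zero and are homotopic, so the constant family causes no difficulty once you are comparing $\eta'$ with $-\eta'$ for a fixed spin$^{\cc}$ structure rather than $\fr{s}$ with $-\fr{s}$.
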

\begin{proof}
By choosing $\lambda$ large enough, we can make 
\begin{equation}\label{eq:cutoff}
\lambda^2\, \min_{b \in B} \int_{X_b} \langle \eta_b \rangle^2 > 4\,\pi^2 \max_{b \in B} 
\int_{X_b} \langle c_1(L) \rangle^2,
\end{equation}
so that both $(g_b,\lambda \eta_b)$ and $(g_b,-\lambda \eta_b)$ satisfies 
\eqref{eq:irreducible} for $\lambda$ large enough. Thus, both 
sides of \eqref{eq:wind-family} are well defined. 
Let us show that there exists a 
homotopy between 
$\left\{ \lambda\, \eta_b \right\}_{b \in B}$ and 
$\left\{ -\lambda\, \eta_b \right\}_{b \in B}$ that 
satisfies \eqref{eq:irreducible}. To begin with, 
we can assume that $\eta_b = \langle \eta_b \rangle$ for each $b \in B$. 
This is because:
\[
\text{if $\eta_{b}$ satisfies \eqref{eq:irreducible}, then so does 
$(1-t)\eta_b + t \langle \eta_b \rangle$.
}
\]
If \eqref{eq:cutoff} holds, then 
the range of both maps
\begin{equation}\label{map:mymaps}
b \to \lambda [ \eta_b ],\quad b \to -\lambda [ \eta_b ]
\end{equation}
lies in the complement of the ball $O \subset K$, 
\[
O = \left\{ \kappa \in K\,|\, \kappa^2 < 4\,\pi^2\,\max_{b \in B} 
\langle c_1(L) \rangle \right\}.
\]
Observe that, for 
every map $f \colon B \to K$, there exists a unique 
section $\tilde{f} \colon B \to \calh$ such that the diagram
\[
\begin{tikzcd}
\calh \arrow{dr}{[\cdot]}  & \\
B \arrow{r}{f} \arrow{u}{\tilde{f}} & K
\end{tikzcd}
\]
commutes. Also, if $\sf{Range}\,f \subset K - O$, then 
$\tilde{f}(b)$ satisfies \eqref{eq:irreducible} for each 
$b \in B$. To conclude the proof, it suffices to show 
that 
the maps \eqref{map:mymaps} are homotopic as maps from $B$ 
to $K - O$. Since $K - O$ is a homotopy $S^2$, 
the maps \eqref{map:mymaps} are homotopic iff their degrees are 
equal to each other. This is the case, as the winding number 
of $(g_b, \pm \lambda\, \eta_b)$ is equal to that of 
$(g_b, \pm \eta_b)$, and the latter is zero. \qed
\end{proof}
\smallskip%

Combining \eqref{eq:wind-family} and \eqref{eq:charge-family}, we obtain 
\begin{equation}\label{eq:lin-symmetry}
\fsw_{(g_b,\lambda \eta_b)}(-\fr{s}) = \fsw_{(g_b,\lambda \eta_b)}(\fr{s})\quad 
\text{for $\lambda$ sufficiently large.}
\end{equation}
Let us consider some examples of families that satisfy \eqref{eq:lin-symmetry}.
\begin{example}[Constant families]
Suppose that $\calx \to B$ is a trivial bundle. 
Then there is a canonical family $(g_b,\eta_b)$, corresponding 
to some constant metric $g_b = g$ and self-dual form $\eta_b = \eta$.
Suppose that $\langle \eta \rangle \neq 0$. Then such a family has vanishing winding number, and hence it satisfies 
\eqref{eq:lin-symmetry}. In fact, in this case, both sides of 
\eqref{eq:lin-symmetry} must vanish.
\end{example}
\begin{example}[Symplectic families]\label{ex:taubes}
Now, let us not assume that 
$\calx \to B$ is trivial, or that the family 
$\left\{ g_b \right\}_{b \in B}$ is any special. But let us keep the assumption
\begin{equation}\label{eq:eta-const}
[\langle \eta_b \rangle] = \const \in K.
\end{equation}
In this case, the quantities of equality 
\eqref{eq:lin-symmetry} may not vanish, but 
the equality itself holds.

Suppose now that the fibers of $\calx \to B$ are furnished with 
a family of symplectic forms $\left\{ \omega_b \right\}_{b \in B}$ of some 
constant cohomology class. 
Pick a family $\left\{ J_b \right\}_{b \in B}$ of $\omega_b$-compatible 
almost-complex structures, so that
\begin{equation}\label{eq:metrics-taubes}
g_b(\cdot, \cdot) := \omega_b(\cdot, J_b \cdot) 
\end{equation}
gives rise to a family of fiberwise metrics. Recall here 
that the space of compatible almost-complex structures is non-empty and contractible (see e.g. \cite[Prop.\,4.1.1]{McD-Sa-2}).
Since
\[
[\omega_b] = \const,
\]
the winding number of $(g_b,\omega_b)$ must vanish.
More generally, we may assume 
that the cohomology class of $\omega_b$ is not constant but 
varies over a small range
\[
[ \omega_b ] \sim \const,
\]
so that the mapping
\[
B \to K,\quad b \to [\omega_b]
\]
is homotopic to a constant map. Then 
the winding number of $(g_b,\omega_b)$ would still have to vanish.
There is a canonical way 
to perturb the Seiberg-Witten equation on symplectic 
4-manifolds. This is by setting
\begin{equation}\label{eq:taubes}
\eta_b = -\rho^2 \omega_b + \text{constant term in $\rho$.}
\end{equation}
As $\rho$ grows, the contribution of the second term 
gets small. Hence, the winding 
number of $(g_b,\eta_b)$ is equal to that of $(g_b,\omega_b)$, 
which is zero. Then \eqref{eq:lin-symmetry} becomes
\begin{equation}\label{eq:extra-rho}
\fsw_{(g_b,\eta_b)}(\fr{s}) = \fsw_{(g_b,\eta_b)}(-\fr{s})\quad 
\text{for $\eta_b$ as in \eqref{eq:taubes} and $\rho$ large.}
\end{equation}
\end{example}

\section{Proof of Theorem\,\ref{t:myA}}\label{sec:resolutions}
Let $\calx' \to \Delta$ be the family 
of quartic $K3$'s given by the complex-analytic fiber space 
\eqref{eq:x'}, and let 
$\caly$ be as in Thm.\,\ref{thm:atia}. From \eqref{eq:x'}, 
we have the mapping $\calx' \to \cp^3$ given by 
$(t,x) \to x$. On the other hand, we also have the resolution map 
$h \colon \caly \to \calx'$ suggested by Thm.\,\ref{thm:atia}.
Let $\Omega$ be the Funini-Study form on $\cp^3$ and let 
$\Omega_{\caly}$ be the pull-back of $\Omega$ under the mapping:
\[
\begin{tikzcd}[row sep = tiny, column sep = large]
\caly \arrow{r}{h} & \calx' \arrow[r] & \cp^3\\
                    & (t,x) \arrow[u,symbol=\in] \arrow{r} & 
                    x\,. \arrow[u,symbol=\in]
\end{tikzcd}
\]
Note that, for each $t \in \Delta - \left\{ 0 \right\}$, 
the restriction of $\Omega_{\caly}$ to $Y_t$ is K{\"a}hler. 
But the restriction of $\Omega_{\caly}$ 
to the central fiber $Y_0$ is degenerate. 
We now will construct a family of K{\"a}hler forms on the fibers of 
$\caly$ by perturbing $\Omega$ in a neighbourhood of $Y_0$. 
To this end, recall that every K3 surface is K{\"a}hler. 
Hence, there exists \textit{some} K{\"a}hler form $\vartheta_0$ on $Y_0$. 
From the theory of complex-analytic 
families (see \cite[Thm.\,15]{K-S}), we recall:
\begin{thma}[Kodaira-Spencer, \cite{K-S}]
Let $\left\{ Y_t \right\}$ 
be a complex-analytic family of non-singular varieties. 
If $Y_{t_0}$ carries 
a K{\"a}hler form, then, any fiber $Y_t$, sufficiently close to $Y_{t_0}$, 
also admits a K{\"a}hler form. Moreover, 
given any K{\"a}hler form on $Y_{t_0}$, we can choose 
a K{\"a}hler form on each $Y_t$, which depends differentiably 
on $t$ and which coincides for $t = t_0$ with the given K{\"a}hler 
form on $Y_{t_0}$.
\end{thma}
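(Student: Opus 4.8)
The plan is to follow the original Hodge-theoretic argument of Kodaira and Spencer: control how the Hodge decomposition of the fibers varies with the complex structure, using the upper semicontinuity of Hodge numbers, and then build the K\"ahler forms by a parametrized $\partial\bar\partial$-lemma.

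\emph{Set-up.} Over a small disc about $t_0$ the family $\caly$ is $C^\infty$-trivial, so I would regard it as a fixed compact $C^\infty$ manifold $M$ carrying a smooth family of integrable complex structures $\{J_t\}$, with $J_{t_0}$ the original one and $J_t \to J_{t_0}$ in $C^\infty$ as $t \to t_0$. The de Rham groups $H^*(M;\rr)$ are then fixed, and what moves is the Hodge filtration $\{H^{p,q}(Y_t)\}$. We are given a real, closed, $J_{t_0}$-positive $(1,1)$-form $\vartheta_{t_0}$, and we must produce real closed $2$-forms $\vartheta_t$, smooth in $t$, of type $(1,1)$ for $J_t$, positive, with $\vartheta_{t_0}$ the prescribed form; note that the class $[\vartheta_t]$ is not required to stay constant, and in general it cannot.

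\emph{Propagating the Hodge structure.} The first and main step is to show that the Hodge decomposition of the central fiber persists: for $t$ near $t_0$ one has $H^k(M;\cc) = \bigoplus_{p+q=k} H^{p,q}(Y_t)$ with $\overline{H^{p,q}(Y_t)} = H^{q,p}(Y_t)$, every $h^{p,q}(Y_t)$ is locally constant, and the $\partial\bar\partial$-lemma holds on $Y_t$, all with estimates uniform in $t$. This is where K\"ahlerity of $Y_{t_0}$ enters: upper semicontinuity gives $h^{p,q}(Y_t) \le h^{p,q}(Y_{t_0})$, Fr\"olicher's inequality gives $\sum_{p+q=k} h^{p,q}(Y_t) \ge b_k(M)$, and since $Y_{t_0}$ is K\"ahler the latter sum equals $b_k$ at $t_0$, hence for all nearby $t$, which forces every $h^{p,q}$ to be constant, the Fr\"olicher spectral sequence of $Y_t$ to degenerate at $E_1$, and (using the conjugation symmetry inherited from the K\"ahler fiber) the $\partial\bar\partial$-lemma to hold. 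The genuine analytic content --- smooth dependence of the harmonic projections and Green operators of $Y_t$ on the complex structure $J_t$ --- all lives in this step.

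\emph{Building the forms.} The real $(1,1)$-classes of $Y_t$, that is the real classes whose $(2,0)$- and $(0,2)$-Hodge components vanish, form a smooth real subbundle of $\Delta \times H^2(M;\rr)$ of rank $h^{1,1}$ near $t_0$, since by the previous step the Hodge projections vary smoothly. I would pick a smooth section $t \mapsto \alpha_t$ of it with $\alpha_{t_0} = [\vartheta_{t_0}]$, then a smooth family of closed real $2$-forms $\beta_t$ representing $\alpha_t$ with $\beta_{t_0} = \vartheta_{t_0}$. Since $\alpha_t$ is a $(1,1)$-class, the $(2,0)$- and $(0,2)$-parts of $\beta_t$ can be cancelled by a real exact form --- this is precisely the parametrized $\partial\bar\partial$-lemma of the previous step, which produces a correction $d\gamma_t$ smooth in $t$ and vanishing at $t_0$ --- so that $\vartheta_t := \beta_t - d\gamma_t$ is a real closed $(1,1)_{J_t}$-form, smooth in $t$, with $\vartheta_{t_0}$ the prescribed form and $\vartheta_t \to \vartheta_{t_0}$ in $C^0$.

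\emph{Positivity comes for free.} Positivity of a $(1,1)$-form is an open condition in the $C^0$-topology, stable under small perturbations of both the form and the almost-complex structure; since $\vartheta_t \to \vartheta_{t_0}$ and $J_t \to J_{t_0}$ in $C^0$, the Hermitian form $\vartheta_t(\cdot, J_t \cdot)$ remains positive definite for $t$ near $t_0$. Hence $\vartheta_t$ is K\"ahler on $Y_t$, which is the claim. The hard part, by far, is the second step: showing that the Hodge decomposition and the $\partial\bar\partial$-lemma survive --- with uniform control --- a small deformation of the complex structure away from a K\"ahler fiber. That is the entire substance of the theorem; everything downstream of it is soft.
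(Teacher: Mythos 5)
First, a point of order: the paper contains no proof of this statement. It is quoted as Theorem~15 of Kodaira--Spencer \cite{K-S} and used as a black box, so there is no internal argument to compare yours against; the relevant comparison is with the source. Your reconstruction is essentially correct, but it follows a genuinely different (and more modern) route than Kodaira and Spencer's. Their proof never mentions the Fr\"olicher spectral sequence or the $\partial\bar\partial$-lemma: they fix a smooth family of Hermitian metrics extending the given K\"ahler one, introduce a fourth-order self-adjoint elliptic operator on $2$-forms whose kernel on the K\"ahler fiber coincides with the $d$-harmonic forms, prove that the Green operator and harmonic projector of a smooth family of elliptic operators vary differentiably in $t$ wherever the kernel dimension is locally constant (this is the technical heart of \cite{K-S}), and then obtain $\vartheta_t$ directly as the $(1,1)$-component of the harmonic projection of the fundamental form, checking closedness and positivity by hand for small $t$. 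Your route --- constancy of the $h^{p,q}$ from upper semicontinuity combined with the Fr\"olicher inequality, a smoothly varying bundle of real $(1,1)$-classes, and a parametrized exact correction --- is cleaner conceptually, but it consumes exactly the same analytic input (smooth dependence of harmonic projectors and Green operators on the parameter), so it is not lighter where the real work is.

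Two caveats on your write-up. First, the appeal to the $\partial\bar\partial$-lemma is both overstated and unnecessary: $E_1$-degeneration alone does not yield the $\partial\bar\partial$-lemma (one also needs the Hodge filtration to remain opposed to its conjugate, and the openness of that condition itself requires the observation that the subspaces $F^p$ vary continuously because their ranks are locally constant). But your construction does not need the full lemma: to remove the $(0,2)$-part of $\beta_t$ it suffices that $\beta_t^{0,2}$ be $\bar\partial_t$-exact, with primitive $\sigma_t = \bar\partial_t^{\,*} G_t\, \beta_t^{0,2}$ chosen via the Green operator so that it is smooth in $t$ and vanishes at $t_0$; then $\beta_t - d(\sigma_t + \bar\sigma_t)$ is real, closed and of type $(1,1)$. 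Second, the symbol $H^{p,q}(Y_t)$ must be given a fixed meaning inside $H^k(M;\cc)$ for the non-K\"ahler nearby fibers (say as $F^p \cap \overline{F^{q}}$, or as harmonic spaces of a fixed smooth metric family), since on a non-K\"ahler fiber the $d$-harmonic forms need not split by type; your phrase ``the $(2,0)$- and $(0,2)$-Hodge components of the class'' presupposes such a choice. Neither point is fatal, but both are places where the argument as written is thinner than it looks.
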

Having fixed $\vartheta_0$ on $Y_0$, we construct a family of K{\"a}hler forms 
$\left\{ \vartheta_t \right\}_{t \in U}$ for a sufficiently small 
neighbourhood $U$ of $0$ in $\Delta$. Choose a bump function 
$\chi \colon \Delta \to \rr$ which equals $1$ at the center of $\Delta$ and equals 
$0$ outside of the neighbourhood $U$. Set:
\begin{equation}\label{eq:perturb}
\omega_t = \Omega_{\caly}|_{Y_t} +  \varepsilon\,\chi\,\vartheta_t\quad\text{
for $\varepsilon$ positive arbitrary small.} 
\end{equation}
Both forms in the right-hand side of \eqref{eq:perturb} are of type $(1,1)$. 
Furthermore, 
$\Omega_{\caly}|_{Y_t}$ is positive for each $t \in \Delta - \left\{ 0 \right\}$ and 
semi-positive for $t = 0$, while $\vartheta_t$ is positive for each $t \in U$. 
Thus, for every $Y_t$, the form $\omega_t$ is a positive $(1,1)$-form, hence is K{\"a}hler.  
Also, we have that:
\begin{equation}\label{eq:class-eps}
[\omega_t] = \const + O(\varepsilon).
\end{equation}
Since $\vartheta_t$ is zero 
for each $t \in \Delta - U$, it follows that:
\[
[\omega_t] = \const\quad \text{for each $t \in \Delta-U$.}
\]
We now fix an abstract 
symplectic K3 surface $(Y,\omega)$ together with a symplectomorphism 
between $(Y,\omega)$ and $(Y_{t_0}, \omega_{t_0})$ for 
some base-point $t_0 \in \Delta - U$. 
We \textit{assume} now that the monodromy homomorphism 
\begin{equation}\label{map:assume-zero}
\pi_1(\Delta - U, t_0) \to \symp(Y,\omega)\quad \text{is the zero homomorphism.}
\end{equation}
In other words, we assume that there is 
a family of symplectomorphisms 
\[
f_t \colon (Y_t,\omega_t) \to (Y,\omega)\quad \text{for every $t \in \del \Delta$.}
\] 
Via the clutching construction, the family $\left\{ f_t \right\}_{t \in \del \Delta}$ 
corresponds to the quotient space:
\[
\cals = \caly \cup Y/\sim,\quad \text{where $(t,y) \sim f_t(y)$ for each $t \in \del \Delta$ and $y \in Y_t$,}
\]
which is a fiber bundle over the 2-sphere 
\[
B = \Delta/\del \Delta.
\]
Since, for each $t \in \del \Delta$, 
the mapping $f_t$ is a symplectomorphism, it follows that 
$\cals \to B$ is a bundle of symplectic manifolds. 
This bundle is not Hamiltonian: the symplectic forms on the fibers are 
not cohomologous. However, their cohomology classes must obey \eqref{eq:class-eps} and so differ from 
each other very little. 
This latter property is interesting: it confers an extra symmetry to the family Seiberg-Witten invariants of this bundle; see Exm.\,\ref{ex:taubes} in \S,\ref{sec:unwinding}. On the contrary, 
an independent computation will show 
that this symmetry fails for $\cals$, which contradicts \eqref{map:assume-zero}.
\smallskip%

First we will need to analyze the Seiberg-Witten equations on the family 
$\caly$, concerning which 
we recall:
\begin{enumerate}[(i)]
\item There is a smooth rational
$(-2)$-curve $C \subset Y_0$ which is embedded in $\caly$ as 
a $(-1,-1)$-curve. 
\smallskip%

\item There is closed $(1,1)$-form $\Omega$ on $\caly$ which is degenerate 
along $C$. Hence,
\[
\int_{C} \Omega_{\caly} = 0.
\]
As $\caly$ is trivial as a differentiable 
family, we can find a fiber-preserving diffeomorphism
\begin{equation}\label{d:trivial}
\begin{tikzcd}
\Delta \times Y_0 \arrow{d}{} \arrow{r}{\Phi} & \caly \arrow{d}{}\\
\Delta \arrow{r}{\id}  & \Delta.
\end{tikzcd}
\end{equation}
Letting $C_t$ denote $\Phi(\left\{ t \right\} \times C)$, we have:
\[
\int_{C_t} \omega_t \geq 0\text{ for each $t \in \Delta$}\quad\text{and}\quad
\int_{C_t} \omega_t = 0\text{ for each $t \in \Delta - U$. }
\]
\end{enumerate}
Letting $\left\{ g_t\right\}_{t \in \Delta}$ be the family of K{\"a}hler metrics 
associated to $\left\{ \omega_t\right\}_{t \in \Delta}$, we pick 
a spin$^{\cc}$ structure $\fr{s}_{C}$ on $T_{\caly/\Delta}$ which, when 
restricted to $Y_0$, satisfies:
\begin{equation}\label{eq:my-spin}
c_1(\fr{s}_{C}) = c_1(Y_0)(=0) + 2\,[C],
\end{equation}
where $[C] \in H^2(Y_0;\zz)$ is the class dual to $C$. 
We remark that \eqref{eq:my-spin} specifies $\fr{s}_{C}$ uniquely.
Now, we recall that, for each $t \in \Delta$, 
the Levi-Civit{\'a} connection of the K{\"a}hler metric $g_t$ 
induces a canonical $\UU(1)$-connection $A_t$ 
on $K_{Y_t}^{*} = \det_{\cc} T^{*}_{Y_t}$. Set:
\begin{equation}\label{eq:kahler-eta}
    i\,\eta_t = F_{A}^{+} - i \, \rho^2\,\omega_t.
\end{equation}
In \cite{K}, 
Kronheimer has proved  the following statement:
\begin{thma}[Kronheimer, \cite{K}]
Let $\left\{ Y_t \right\}_{t \in \Delta}$ be a 
family of non-singular surfaces (not not necessarily K3 surfaces) given 
as a complex-analytic fiber bundle $\caly \to \Delta$, and let 
$C \subset Y_0$ be a smooth rational $(-1,-1)$-curve. 
Suppose that, for each $t \in \Delta - \left\{ 0 \right\}$, the fiber $Y_t$ 
has no effective divisors that are 
homologous to $C_t$. 
Then, for the spin$^{\cc}$ structure $\fr{s}_{C}$ above, 
$\eta_t$ as in \eqref{eq:kahler-eta}, and $\rho$ 
large enough, the 
parameterized moduli space 
\[
\fr{M}^{\fr{s}_C}_{(g_t,\eta_t)} = \bigcup_{t \in \Delta} \scrm^{\fr{s}_C}_{(g_t,\eta_t)}
\]
consists of a single point which lies above the fiber $Y_0$.
This moduli space is transverse of the correct dimension.
\end{thma}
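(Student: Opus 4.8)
The plan is to reduce, one fibre at a time, the monopole equations for $\fr{s}_C$ to holomorphic data --- via the K{\"a}hler-surface analysis of the Seiberg--Witten equations (after Witten) --- and then to read the parametrised moduli space off the geometry of $C$. On each fibre $Y_t$, \eqref{eq:my-spin} says that $\fr{s}_C$ is the canonical spin$^{\cc}$ structure twisted by $\scro(C_t)$ (the canonical one has $c_1=c_1(Y_t)=-c_1(K_{Y_t})$), and \eqref{eq:kahler-eta} is the Taubes perturbation written against the canonical connection on $K_{Y_t}^{*}$. Pairing the curvature equation with $\omega_t$ and integrating shows that $\int_{Y_t}(|\alpha|^{2}-|\beta|^{2})$ grows like $\rho^{2}$, so for $\rho$ large no monopole has $\alpha\equiv 0$, while \eqref{eq:irreducible} reads $\langle\eta_t+2\pi c_1(L)\rangle=-\rho^{2}\langle\omega_t\rangle+O(1)\neq 0$, so none is reducible. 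Unique continuation then forces $\beta\equiv 0$ with $\alpha$ holomorphic; hence, with its deformation theory, $\scrm^{\fr{s}_C}_{(g_t,\eta_t)}$ is the space of effective divisors on $Y_t$ homologous to $C_t$, the tangent and obstruction spaces at a divisor $D$ being $H^{0}(D,\scro_D(D))$ and $H^{1}(D,\scro_D(D))$.

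For $t\neq 0$ the hypothesis gives no effective divisor homologous to $C_t$, so $\scrm^{\fr{s}_C}_{(g_t,\eta_t)}=\emptyset$ and the parametrised moduli space lies entirely over $Y_0$. Over $Y_0$ the only effective divisor in the class $[C]$ is $C$ itself: if $D\ge 0$ with $[D]=[C]$ then $D\cdot C=C^{2}=-2<0$ forces $C$ to be a component, $D=mC+D'$ with $m\ge 1$ and $D'\ge 0$ not containing $C$, so $[D']=(1-m)[C]$; pairing with the K{\"a}hler class gives $0\le\int_{D'}\omega_0=(1-m)\int_{C}\omega_0$, which (since $\int_C\omega_0>0$) forces $m=1$, whence $[D']=0$ and then $D'=0$, a non-zero effective divisor having positive K{\"a}hler area. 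As $C$ is a smooth rational curve, its defining section spans $H^{0}(Y_0,\scro(C))$, so $|C|$ is a single reduced point and $\scrm^{\fr{s}_C}_{(g_0,\eta_0)}$ is one point. Lastly, adjunction gives $K_{Y_0}\cdot C=0$, hence $c_1^{2}(\fr{s}_C)=c_1(Y_0)^{2}-8$, and with $c_1(Y_0)^{2}=2\chi(Y_0)+3\sigma(Y_0)$ this gives $d(\fr{s}_C)=\frac14(c_1^{2}(\fr{s}_C)-3\sigma(Y_0)-2\chi(Y_0))=-2$, so $d(\fr{s}_C,\Delta)=-2+\dim\Delta=0$ --- the asserted correct dimension.

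It remains to establish transversality, and this is the heart of the matter. On $Y_0$ alone the $C$-solution is over-obstructed: $H^{0}(C,N_{C/Y_0})=H^{0}(\cp^1,\scro(-2))=0$ but $H^{1}(C,N_{C/Y_0})=H^{1}(\cp^1,\scro(-2))\cong\cc$, so the fibrewise moduli space sits in dimension $0$ rather than $d(\fr{s}_C)=-2$, the discrepancy being exactly this $\cc$. In the family over $\Delta$ the obstruction is corrected by the $\Delta$-direction: after the canonical identification $T_0\Delta\cong H^{0}(C,N_{Y_0/\caly}|_C)$ (the normal bundle of a fibre is trivial), the extra component of the linearised monopole operator is the connecting homomorphism $\delta$ of the sequence
\[
0\longrightarrow N_{C/Y_0}\longrightarrow N_{C/\caly}\longrightarrow N_{Y_0/\caly}|_{C}\longrightarrow 0 .
\]
Because $C$ is a $(-1,-1)$-curve in $\caly$, i.e. $N_{C/\caly}\cong\scro(-1)\oplus\scro(-1)$, we have $H^{0}(C,N_{C/\caly})=H^{1}(C,N_{C/\caly})=0$, and since $H^{0}(C,N_{C/Y_0})=0$ and $H^{1}(C,N_{Y_0/\caly}|_C)=H^{1}(\cp^1,\scro)=0$ as well, the long exact sequence forces $\delta\colon\cc\to\cc$ to be an isomorphism. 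Hence the full family linearisation at the unique solution has trivial kernel and cokernel: the parametrised moduli space is a single transverse point of dimension $0$, as claimed. The step that carries the real work is this structural claim --- that the $\Delta$-component of the linearised monopole operator is precisely $\delta$ --- for which one sets up the Kuranishi model of the family moduli space near the $C$-solution and matches it with the deformation theory of $C\subset\caly$; an equivalent, more analytic route (in the spirit of \cite{K}) invokes exponential decay of the monopole away from $C$ to localise the linearised equation near $C$ and reduce it to the model Cauchy--Riemann operator on $\scro(-1)\oplus\scro(-1)$ over $\cp^1$, which one checks has vanishing cokernel.
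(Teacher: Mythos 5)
The paper offers no proof of this statement: it is imported wholesale from Kronheimer's paper \cite{K} (``In \cite{K}, Kronheimer has proved the following statement''), so there is no internal argument to compare yours against. Judged on its own, your outline follows the route one expects and that Kronheimer in fact takes: Witten's K{\"a}hler reduction identifies the large-$\rho$ perturbed moduli space on each fibre with the effective divisors in the class $[C_t]$ (empty for $t\neq 0$ by hypothesis; a single reduced point over $Y_0$ by your intersection-theoretic argument, which is correct since $\int_C\omega_0>0$ for the genuine K{\"a}hler form $\omega_0$); the index count $d(\fr{s}_C)=-2$ is right; and the transversality mechanism --- the connecting homomorphism $H^0(C,N_{Y_0/\caly}|_C)\to H^1(C,N_{C/Y_0})$ being an isomorphism precisely because $N_{C/\caly}\cong\scro(-1)\oplus\scro(-1)$ has no cohomology --- is exactly what the $(-1,-1)$ hypothesis is for.

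Two steps, however, are asserted rather than proved, and they carry essentially all the weight. First, on a surface with $p_g>0$ (the K3 case of interest) the fibrewise Kuranishi model at a divisor $D$ is not simply the pair $H^0(\scro_D(D))$, $H^1(\scro_D(D))$: the groups $H^1(\scro_X)$ and $H^2(\scro_X)$ enter the deformation complex of the pair (holomorphic structure, section), and one must check that after perturbation the surviving obstruction really is the copy of $H^1(C,N_{C/Y_0})\cong\cc$ you name --- the index forces the cokernel to be one complex dimension once the kernel vanishes, but identifying it with that particular group is part of the work. Second, and more seriously, the claim that the $\Delta$-component of the family linearisation is the connecting homomorphism $\delta$ \emph{is} the theorem; you flag this yourself but supply only a program (Kuranishi matching, or exponential-decay localisation to the model operator on $\scro(-1)\oplus\scro(-1)$) rather than an argument. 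As a blueprint of the proof in \cite{K} the proposal is sound; as a proof it is incomplete at exactly that point.
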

To analyze the Seiberg-Witten equations on $\cals$ we will use:
\begin{lem}
Let $\left\{ J_t \right\}_{t \in \Delta}$ be the 
family of complex-structures on the fibers of $\caly$. 
There exists another family of $\omega_t$-compatible 
\textbf{almost}-complex structures 
$\left\{\right.\!\hat{J}_t\!\left.\right\}_{t \in \Delta}$, 
which is homotopic to 
$\left\{ J_t \right\}_{t \in \Delta}$, and which satisfies the following:
\begin{enumerate}[(i)]
\item $J_t = \hat{J}_t$ for each $t \in U$;
\smallskip%
\item $\hat{J}_t = (f_t^{-1})_{*} \circ J \circ (f_t)_{*}$ 
for all $t \in \del\Delta$ and 
some $\omega$-compatible almost-complex structure $J$ on 
$(X,\omega)$. This condition 
implies that $\left\{\right.\!\hat{J}_t\!\left.\right\}_{t \in \Delta}$ 
gives a family of almost-complex structures on the fibers of $\cals$.
\end{enumerate}
\end{lem}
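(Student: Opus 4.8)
The plan is to obtain $\{\hat J_t\}$ by interpolating, over the annular part of $\Delta$, between two families of compatible almost-complex structures that are fixed in advance on $\bar U$ and on $\del\Delta$; the homotopy with $\{J_t\}$ will then come for free. The argument is soft, using only that the space of compatible almost-complex structures is contractible.

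First I would set up the relevant bundle. For $t\in\Delta$ let $\calj_t$ denote the space of $\omega_t$-compatible almost-complex structures on $Y_t$; by \cite[Prop.\,4.1.1]{McD-Sa-2} it is non-empty and contractible, and it carries a canonical smooth deformation retraction. Via the smooth trivialization $\Phi$ of \eqref{d:trivial}, the spaces $\calj_t$ assemble into a smooth fiber bundle $\calj\to\Delta$ with contractible fibers, of which $\{J_t\}_{t\in\Delta}$ is a smooth section. Next I would fix any $\omega$-compatible almost-complex structure $J$ on $(Y,\omega)$ (for the later application one takes $J$ to be K{\"a}hler, but that is irrelevant here). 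Since each $f_t\colon(Y_t,\omega_t)\to(Y,\omega)$, $t\in\del\Delta$, is a symplectomorphism, $(f_t^{-1})_*\circ J\circ(f_t)_*$ is $\omega_t$-compatible, so $t\mapsto(f_t^{-1})_*\circ J\circ(f_t)_*$ is a smooth section of $\calj$ over $\del\Delta$.

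Assume, as we may, that $U$ is a disk. Then the two partial sections $t\mapsto J_t$ over $\bar U$ and $t\mapsto(f_t^{-1})_*\circ J\circ(f_t)_*$ over $\del\Delta$ together define a section of $\calj$ over the closed set $\bar U\cup\del\Delta\subset\Delta$. Because the fibers of $\calj$ are contractible, every obstruction to extending this section over all of $\Delta$ vanishes; and since the retraction is smooth, the extension can be taken smooth as well --- for instance, extend it continuously over the annulus $\Delta\setminus\mathrm{int}\,\bar U$ and then smooth the result fiberwise using the retraction together with a partition of unity. Let $\{\hat J_t\}_{t\in\Delta}$ be this extension. It satisfies (i) on $\bar U\supset U$ and (ii) on $\del\Delta$ by construction, and, as the statement observes, (ii) makes $\{\hat J_t\}$ descend to a family of compatible almost-complex structures on the fibers of $\cals$. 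Finally, $\{\hat J_t\}$ and $\{J_t\}$ are two smooth sections of the bundle $\calj\to\Delta$; since both the base $\Delta$ and the fibers of $\calj$ are contractible, the space of sections of $\calj$ is path-connected, and any path joining these two sections is the desired homotopy through families of $\omega_t$-compatible almost-complex structures. If one prefers the homotopy to be stationary on $U$, the same reasoning applied inside the (still contractible) space of sections of $\calj$ with prescribed restriction to $\bar U$ provides it.

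The only point that is not completely formal is arranging the interpolation over the annulus to be genuinely smooth rather than merely continuous, and this is the step I expect to require the most care; it is dispatched by carrying out everything inside the standard smooth deformation retraction of the space of compatible almost-complex structures, which lets one smooth any continuous section or homotopy without ever leaving that space.
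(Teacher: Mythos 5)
Your argument is correct and is exactly the paper's proof spelled out in detail: the paper simply cites the contractibility of the space of $\omega_t$-compatible almost-complex structures, and your section-extension and obstruction-vanishing argument is the standard elaboration of that one-line justification.
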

\begin{proof}
Follows from the well-known fact that, for each $t \in \Delta$, the 
space of $\omega_t$-compatible almost-complex structures is contractible. \qed
\end{proof}
\smallskip%

If now $\left\{\right.\!\hat{g}_t\!\left.\right\}_{t \in \Delta}$ be the 
family of Hermitian 
metrics defined as $g_b(\cdot, \cdot) := \omega_b(\cdot, \hat{J}_b \cdot)$, then, 
for each $t \in U$, it satisfies $\hat{g}_t = g_t$. Hence, 
given any family of perturbations $\left\{ \eta_t \right\}_{t \in \Delta}$, we have:
\begin{equation}\label{eq:decomposition}
   \fr{M}^{\fr{s}_{C}}_{(\hat{g}_t,\eta_t)} = 
   \fr{M}^{\fr{s}_{C}}_{(1)} \bigcup \fr{M}^{\fr{s}_{C}}_{(2)},\quad
   \text{
   where $\fr{M}^{\fr{s}_{C}}_{(1)} = 
   \bigcup_{t \in U} \scrm^{\fr{s}_C}_{(g_t,\eta_t)}$ and 
   $\fr{M}^{\fr{s}_{C}}_{(2)} = 
   \bigcup_{t \in \Delta - U} \scrm^{\fr{s}_C}_{(\hat{g}_t,\eta_t)}$.
   }
\end{equation}
From Taubes' theory of Gromov invariants (see \cite{Taub-1,Taub-2}), we recall:
\begin{thma}[Taubes, \cite{Taub-1,Taub-2}]\label{t:taubes-vanishing}
Let $(X,\omega)$ be a closed symplectic $4$-manifold, 
$J$ any $\omega$-compatible almost-complex structure, 
and $g$ the associated 
Hermitian metric. Choose a cohomology class 
$\varepsilon \in H^2(X;\zz)$ such that $\varepsilon \cdot [\omega] \leq 0$ 
and let $\fr{s}_{\varepsilon}$ be the spin$^{\cc}$ structure such that:
\[
c_1(\fr{s}_{\varepsilon}) = c_1(X) + 2\,\varepsilon,
\]
Then there exists a special $\UU(1)$-connection $A$ on $K^{*}_{X}$ such that for  
the family of perturbation 
\begin{equation}\label{eq:taubes-for-symplectic}
i\,\eta = F_A^{+} - i\,\rho^2\,\omega\quad\text{and}\quad\text{$\rho$ large enough,}
\end{equation}
the moduli space $\scrm^{\fr{s}_{\varepsilon}}_{(g,\eta)}$ is empty. 
If $(X,\omega)$ is K{\"a}hler, then the special connection $A$ is 
the one induced by the Levi-Civit{\'a} connection.
\end{thma}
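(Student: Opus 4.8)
The plan is to follow Taubes' analysis of the Seiberg-Witten equations of $(X,\omega,J,g)$ under the large perturbation \eqref{eq:taubes-for-symplectic}. I would first set up the spinor bundles adapted to $J$: since $J$ is $\omega$-compatible, $\fr{s}_\varepsilon$ is the \emph{twisted canonical} spin$^{\cc}$ structure with $W^+=E\oplus(E\otimes K_X^{-1})$ and $W^-=E\otimes\Lambda^{0,1}T^*X$, where $E\to X$ is a Hermitian line bundle with $c_1(E)=\varepsilon$ and $K_X$ is the canonical bundle of $(X,J)$; then $\det W^+=E^{\otimes 2}\otimes K_X^{-1}$ has first Chern class $c_1(X)+2\varepsilon$, as required. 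Writing a positive spinor as $\varphi=(\alpha,\beta)$ with $\alpha\in\Gamma(E)$ and $\beta\in\Gamma(E\otimes K_X^{-1})$, I would record two facts of Clifford linear algebra: multiplication by $-i\omega$ acts on $W^+$ as a positive scalar on the $E$-summand and the opposite scalar on the $E\otimes K_X^{-1}$-summand, while the squaring map sends $(\alpha,0)$ to a positive multiple of $i\,|\alpha|^2\,\omega$ up to a term quadratic in $\beta$. The \emph{special} connection $A$ is the one on $K_X^*$ for which the Dirac operator of the canonical spin$^{\cc}$ structure annihilates the constant spinor $(1,0)$; such a connection exists, and when $(X,\omega)$ is K\"ahler the Levi-Civit\'a connection already has this property because $\dbar 1=0$, which is the last assertion of the theorem. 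With the perturbation $i\eta=F_A^+-i\rho^2\omega$, the Seiberg-Witten equations for a solution $(\varphi,B)$ read $\cald^{B}\varphi=0$ together with $F_B^+=\sigma(\varphi)+F_A^+-i\rho^2\omega$; when $E$ is trivial one has $\det W^+\cong K_X^*$, and a direct substitution shows that $\varphi=(c\rho,0)$, $B=A$ solves them to leading order, with $c$ a universal constant.

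The analytic heart is Taubes' a priori estimates, which I would establish next. Feeding the curvature equation into the Weitzenb\"ock identity $\cald^{B*}\cald^{B}=\nabla_B^*\nabla_B+\frac{s}{4}+\frac12\rho(F_B^+)$ and applying the maximum principle yields pointwise bounds, uniform over all solutions and depending only on $g$: $|\alpha|$ is at most a fixed multiple of $\rho$, the spinor $\beta$ has size $O(\rho^{-1})$, and $|\alpha|$ stays within a $g$-dependent constant of its maximal size away from the zero set $Z=\alpha^{-1}(0)$, whose $J$-area is bounded independently of $\rho$. As $\rho\to\infty$ the $\alpha$'s concentrate along $Z$, and by Taubes' compactness theorem the zero sets $Z$ converge, after passing to a subsequence, to a $J$-holomorphic subvariety $Z_\infty\subset X$ whose homology class is $\mathrm{PD}(\varepsilon)$, since each $Z$ is the zero locus of a section of $E$.

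To conclude I would argue by contradiction. If $\scrm^{\fr{s}_\varepsilon}_{(g,\eta)}$ were non-empty for a sequence $\rho_n\to\infty$, pick solutions $(\varphi_n,B_n)$; because $\varepsilon\neq 0$ the bundle $E$ is non-trivial, so each $\alpha_n$ vanishes somewhere and $Z_n=\alpha_n^{-1}(0)\neq\varnothing$. Then $Z_\infty$ is a \emph{non-empty} closed $J$-holomorphic subvariety lying in the nonzero class $\mathrm{PD}(\varepsilon)$, hence has strictly positive symplectic area, so
\[
0<\int_{Z_\infty}\omega=\big\langle[\omega]\cup\varepsilon,\,[X]\big\rangle=\varepsilon\cdot[\omega]\le 0,
\]
a contradiction; thus for $\rho$ large the moduli space is empty. (Here I use $\varepsilon\neq 0$, which holds in the application at hand since $\varepsilon^2=-2$; when $\varepsilon=0$, $\fr{s}_\varepsilon$ is the canonical spin$^{\cc}$ structure and its moduli space instead contains the Taubes point.)

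The main obstacle is the second step: Taubes' pointwise a priori estimates and the compactness theorem extracting the $J$-holomorphic limit $Z_\infty$ from the zero loci of $\alpha$. This is the substantial analytic input, and I would cite it from \cite{Taub-1,Taub-2} rather than reprove it; the other delicate point is pinning down the ``special'' connection on $K_X^*$ in the non-integrable case, so that the model $(c\rho,0)$ is genuinely a near-solution. Everything else reduces to Clifford algebra and Chern-Weil bookkeeping.
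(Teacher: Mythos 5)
This theorem is quoted from Taubes --- the paper gives no proof of it beyond the citations to \cite{Taub-1,Taub-2} and the notes in \cite{wild} --- so there is no in-paper argument to compare yours against. Your sketch is a faithful outline of Taubes' original proof (canonical splitting $W^+=E\oplus(E\otimes K_X^{-1})$, a priori estimates from the Weitzenb\"ock formula, convergence of the zero loci of $\alpha$ to a $J$-holomorphic representative of $\mathrm{PD}(\varepsilon)$, and the area contradiction $0<\varepsilon\cdot[\omega]\le 0$), with the analytic core cited rather than reproved, which is exactly the level at which the paper itself uses the result. Two small remarks. First, your caveat about $\varepsilon=0$ is correct and worth recording: as literally stated the theorem fails for the canonical spin$^{\cc}$ structure, whose large-$\rho$ moduli space is the single Taubes point, so the hypothesis must be read as $\varepsilon\neq 0$ together with $\varepsilon\cdot[\omega]\le 0$; this is harmless in the application, where $\varepsilon=\pm[C]$ with $[C]^2=-2$. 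Second, the inference ``each $Z_n\neq\varnothing$, hence $Z_\infty\neq\varnothing$'' is not by itself valid (when $\varepsilon\cdot[\omega]=0$ the areas of the $Z_n$ tend to $0$ and the limit current could a priori vanish); the correct reason, which your parenthetical already supplies, is that the limit represents the nonzero class $\mathrm{PD}(\varepsilon)$, so it cannot be the empty subvariety, and a non-empty closed $J$-holomorphic subvariety has strictly positive $\omega$-area.
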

This theorem is also explained in the notes for Ch.\,10 in \cite{wild}.
\smallskip%

Thus, if $\eta_t$ is as in \eqref{eq:taubes-for-symplectic}, then 
$\fr{M}^{\fr{s}_{C}}_{(2)}$ will be empty, whereas $\fr{M}^{\fr{s}_{C}}_{(1)}$ 
will consist of a single point. On the other hand, restricting 
the spin$^{\cc}$ structure $-\fr{s}_{[C]}$ on $Y_t$, we get:
\[
c_1(-\fr{s}_{[C]}) = -2\,[C],
\]
since $c_1(Y_t) = 0$. Combining this with Thm.\,\ref{t:taubes-vanishing}, we see that  
$\fr{M}^{-\fr{s}_{C}}_{(\hat{g}_t,\eta_t)}$ is empty for 
$\rho$ sufficiently large. Thus, 
for the Seiberg-Witten invariant of $\cals \to B$, we see that:
\[
\fsw_{(\hat{g}_b,\eta_b)}(\fr{s}_{[C]}) = 1\quad \text{and}\quad \fsw_{(\hat{g}_b,\eta_b)}(-\fr{s}_{[C]}) = 0\quad 
\text{for $\eta_b$ as in \eqref{eq:taubes}.}
\]
But, by \eqref{eq:extra-rho}, these invariants must be equal. 
This contradiction finishes the proof.

\bibliographystyle{plain}
\bibliography{references}

\end{document}